\newtheorem{theorem}{Theorem}[section]
\newtheorem{lemma}[theorem]{Lemma}
\newtheorem{proposition}[theorem]{Proposition}
\newtheorem{corollary}[theorem]{Corollary}
\theoremstyle{definition}
\newtheorem{definition}[theorem]{Definition}
\newtheorem{example}[theorem]{Example}
\theoremstyle{remark}
\newtheorem{remark}[theorem]{Remark}
\numberwithin{equation}{section}
\begin{document}

\setcounter{page}{1}

\title[FPT on $G$-metric spaces]{New fixed point results on $G$-metric spaces.}

\author[Ya\'e Olatoundji Gaba]{Ya\'e Olatoundji Gaba$^{1,*}$}

\address{$^{1}$Department of Mathematics and Applied Mathematics, University of Cape Town, South Africa.}
\email{\textcolor[rgb]{0.00,0.00,0.84}{gabayae2@gmail.com
}}

\subjclass[2010]{Primary 47H05; Secondary 47H09, 47H10.}

\keywords{$G$-metric, fixed point, orbitally continuous.}

\begin{abstract}
In this note, we discuss some fixed point theorems for contractive self mappings defined on a $G$-metric spaces. More precisely, we give fised point theorems for mappings with a contractive iterate at a point.
\end{abstract} 

\maketitle

\section{Introduction and preliminaries}

In recent years, numerous generalizations of the 
Banach contraction principle have appeared in the literature and the authors have introduced mappings of different contractive kind and studied the existence of related fixed points. The concept of metric space, as a convenient framework in fixed point theory, has been generalized in several directions. Some of such generalizations are $G$-metric spaces. Although a $G$-metric space is topologically equivalent to a metric space, both spaces are ``isometrically" distinct. Many fixed point in $G$-metric spaces appear in the litterature and the works by Jleli\cite{j}, Kadelburg\cite{r}, Mohanta\cite{s}, Mustafa et al. (\cite{mustafa1, mustafa3, mustafa4,mustafa2}), Patil\cite{p}, Gaba\cite{Gaba1,Gaba5} and many more, are leading results on the subjesct. The aim of this paper is to generalize, unify, and extend some theorems of well-known authors such as of \'{C}iri\'{c}\cite{ciric}, Jachymaski\cite{ja1}, Rhoades \cite{ro}, from metric spaces to $G$-metric spaces.

\vspace*{0.2cm}

The basic concepts and notations attached to the idea of $G$-metric space
can be read extensively in \cite{Mustafa} but for the convenience of the reader, we here recall the most important ones.

\begin{definition} (Compare \cite[Definition 3]{Mustafa})
Let $X$ be a nonempty set, and let the function $G:X\times X\times X \to [0,\infty)$ satisfy the following properties:
\begin{itemize}
\item[(G1)] $G(x,y,z)=0$ if $x=y=z$ whenever $x,y,z\in X$;
\item[(G2)] $G(x,x,y)>0$ whenever $x,y\in X$ with $x\neq y$;
\item[(G3)] $G(x,x,y)\leq G(x,y,z) $ whenever $x,y,z\in X$ with $z\neq y$;
\item[(G4)] $G(x,y,z)= G(x,z,y)=G(y,z,x)=\ldots$, (symmetry in all three variables);

\item[(G5)]
$$G(x,y,z) \leq [G(x,a,a)+G(a,y,z)]$$ for any points $x,y,z,a\in X$.
\end{itemize}
Then $(X,G)$ is called a \textbf{$G$-metric space}.

\end{definition}

\begin{proposition}\label{prop1} (Compare \cite[Proposition 6]{Mustafa})
Let $(X,G)$ be a $G$-metric space. Define on $X$ the metric  $d_G$ by $d_G(x,y)= G(x,y,y)+G(x,x,y)$ whenever $x,y \in X$. Then for a sequence $(x_n) \subseteq X$, the following are equivalent
\begin{itemize}
\item[(i)] $(x_n)$ is $G$-convergent to $x\in X$ .

\item[(ii)] $\lim_{n,m \to \infty}G(x,x_n,x_m)=0.$

\item[(iii)]  $\lim_{n \to \infty}d_G(x_n,x)=0$.

\item[(iv)]$\lim_{n \to \infty}G(x,x_n,x_n)=0.$ 

\item[(v)]$\lim_{n \to \infty}G(x_n,x,x)=0.$ 
\end{itemize}

\end{proposition}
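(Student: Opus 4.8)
The plan is to establish the cycle of implications $(i)\Leftrightarrow(ii)\Rightarrow(iv)\Leftrightarrow(v)\Leftrightarrow(iii)\Rightarrow(ii)$, which forces all five statements to be equivalent. The equivalence $(i)\Leftrightarrow(ii)$ I would dispatch simply by recalling the definition of $G$-convergence from \cite{Mustafa}: to say that $(x_n)$ is $G$-convergent to $x$ is, by definition, to say that for every $\varepsilon>0$ there is $N$ with $G(x,x_n,x_m)<\varepsilon$ whenever $n,m\ge N$, i.e. $\lim_{n,m\to\infty}G(x,x_n,x_m)=0$. The implication $(ii)\Rightarrow(iv)$ is then immediate: set $m=n$.

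Before treating the remaining items I would record the elementary two-sided comparison
$$G(x,y,y)\le 2\,G(x,x,y)\quad\text{and}\quad G(x,x,y)\le 2\,G(x,y,y)\qquad(x,y\in X),$$
each following from a single application of $(G5)$ combined with the full symmetry $(G4)$; for the first, apply $(G5)$ in the form $G(y,y,x)\le G(y,x,x)+G(x,y,x)$ and rewrite both terms on the right as $G(x,x,y)$ using $(G4)$. Applying this with $x_n$ in place of $y$, the quantities $G(x,x_n,x_n)=G(x_n,x_n,x)$ and $G(x_n,x,x)$ differ by at most a factor $2$, while $d_G(x_n,x)=G(x_n,x,x)+G(x_n,x_n,x)$ is their sum; hence all three tend to $0$ together, giving $(iii)\Leftrightarrow(iv)\Leftrightarrow(v)$.

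It remains to prove $(iv)\Rightarrow(ii)$, the only step needing an actual estimate. I would use $(G5)$ twice. First, with split point $x_n$,
$$G(x,x_n,x_m)\le G(x,x_n,x_n)+G(x_n,x_n,x_m);$$
then, after rewriting $G(x_n,x_n,x_m)=G(x_m,x_n,x_n)$ by $(G4)$ and applying $(G5)$ with split point $x$,
$$G(x_m,x_n,x_n)\le G(x_m,x,x)+G(x,x_n,x_n).$$
Combining and invoking the preliminary comparison once more,
$$G(x,x_n,x_m)\le 2\,G(x,x_n,x_n)+G(x,x,x_m)\le 2\,G(x,x_n,x_n)+2\,G(x,x_m,x_m).$$
By $(iv)$ each term on the right tends to $0$ as $n,m\to\infty$, so $\lim_{n,m\to\infty}G(x,x_n,x_m)=0$, which is $(ii)$. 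This closes the cycle.

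The only point requiring care is the bookkeeping: every use of $(G5)$ must be set up---permuting arguments via $(G4)$ as needed---so that what appears on the right is one of the controlled ``doubled'' quantities $G(x,x_k,x_k)$ or $G(x_k,x,x)$, and never the mixed term $G(x,x_n,x_m)$ again. I do not anticipate any deeper obstacle; note that $(G3)$ is not used at all here, only $(G4)$ and $(G5)$.
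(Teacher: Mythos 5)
The paper does not actually prove this proposition: it is recalled verbatim from Mustafa--Sims (``Compare [Proposition 6]'') as background, so there is no in-paper argument to compare yours against. Judged on its own, your proof is correct and complete. The two-sided comparison $G(x,y,y)\le 2G(x,x,y)$ and $G(x,x,y)\le 2G(x,y,y)$ is exactly the auxiliary fact (Proposition 1 of Mustafa--Sims) that the paper says it will ``make use of,'' and your derivation of it from (G5) and (G4) is right; the chain $G(x,x_n,x_m)\le 2G(x,x_n,x_n)+2G(x,x_m,x_m)$ for $(iv)\Rightarrow(ii)$ checks out, and the cycle of implications does force all five statements to coincide. The one point worth flagging is your dispatch of $(i)\Leftrightarrow(ii)$ ``by definition'': in Mustafa--Sims, $G$-convergence is defined topologically, via the basic balls $B_G(x,r)=\{y: G(x,y,y)<r\}$, so under that convention $(i)$ is on its face equivalent to $(iv)$ rather than to $(ii)$, and the equivalence with the double-limit condition $(ii)$ is part of what the proposition asserts. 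Since you prove $(iv)\Rightarrow(ii)$ anyway, your argument closes the loop under either reading of the definition, but you should state explicitly which definition of $G$-convergence you are starting from rather than treating $(i)\Leftrightarrow(ii)$ as tautological.
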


\begin{proposition}(Compare \cite[Proposition 9]{Mustafa})

In a $G$-metric space $(X,G)$, the following are equivalent
\begin{itemize}
\item[(i)] The sequence $(x_n) \subseteq X$ is $G$-Cauchy.

\item[(ii)] For each $\varepsilon >0$ there exists $N \in \mathbb{N}$ such that $G(x_n,x_m,x_m)< \varepsilon$ for all $m,n\geq N$.

\item[(iii)] $(x_n)$ is a Cauchy sequence in the metric space $(X, d_G)$.
\end{itemize}

\end{proposition}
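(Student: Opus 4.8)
The plan is to recall first that, by the definition adopted from \cite{Mustafa}, a sequence $(x_n)\subseteq X$ is $G$-Cauchy precisely when for every $\varepsilon>0$ there exists $N\in\mathbb{N}$ such that $G(x_n,x_m,x_\ell)<\varepsilon$ for all $n,m,\ell\ge N$; then to close the loop by proving (i)$\Rightarrow$(ii)$\Rightarrow$(iii)$\Rightarrow$(i). The implication (i)$\Rightarrow$(ii) is immediate: one simply specialises $\ell=m$ in the defining condition. For (ii)$\Rightarrow$(iii) I would use the formula $d_G(x,y)=G(x,y,y)+G(x,x,y)$ together with the full symmetry (G4): since (ii) is assumed for all $m,n\ge N$, exchanging the roles of $n$ and $m$ gives both $G(x_n,x_m,x_m)<\varepsilon$ and $G(x_m,x_n,x_n)=G(x_n,x_n,x_m)<\varepsilon$, hence $d_G(x_n,x_m)<2\varepsilon$ for $n,m\ge N$, so $(x_n)$ is Cauchy in $(X,d_G)$.

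For the remaining implication (iii)$\Rightarrow$(i), I would start from the elementary observation that, for any indices $p,q$, both $G(x_p,x_q,x_q)\le d_G(x_p,x_q)$ and (using (G4)) $G(x_p,x_p,x_q)\le d_G(x_p,x_q)$, so that $d_G$-Cauchyness controls every ``two-index'' value of $G$. The real task is then to bound a genuine three-index value $G(x_n,x_m,x_\ell)$. Here I would invoke the rectangle inequality (G5) twice: first $G(x_n,x_m,x_\ell)\le G(x_n,x_N,x_N)+G(x_N,x_m,x_\ell)$, and then $G(x_N,x_m,x_\ell)\le G(x_N,x_m,x_m)+G(x_m,x_m,x_\ell)$, after which (G4) rewrites $G(x_m,x_m,x_\ell)=G(x_\ell,x_m,x_m)$. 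Each of the three summands is now of two-index type, hence $<\varepsilon$ once $n,m,\ell$ are at least the threshold index $N$ furnished by (iii), giving $G(x_n,x_m,x_\ell)<3\varepsilon$; since $\varepsilon$ is arbitrary, $(x_n)$ is $G$-Cauchy.

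The step I expect to require the most care is precisely this last reduction: one must choose the intermediate point in (G5) and re-symmetrise via (G4) so that every term appearing in the estimate has the form $G(x_p,x_q,x_q)$ or $G(x_p,x_p,x_q)$, and one must not be alarmed by the constants $2$ and $3$ that surface, as they are absorbed by the arbitrariness of $\varepsilon$. Beyond this, the argument is a routine $\varepsilon$-chase and uses no property of $G$-metric spaces deeper than axioms (G3)--(G5); indeed the statement is essentially \cite[Proposition 9]{Mustafa}, recorded here for later use.
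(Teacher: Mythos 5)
Your argument is correct: the chain (i)$\Rightarrow$(ii)$\Rightarrow$(iii)$\Rightarrow$(i) goes through, and the key reduction in (iii)$\Rightarrow$(i) --- two applications of (G5) with intermediate points $x_N$ and $x_m$, followed by (G4) to put every summand in the two-index form $G(x_p,x_q,x_q)\le d_G(x_p,x_q)$ --- is exactly the standard device. Note that the paper itself states this proposition without proof, as a preliminary recalled from Mustafa and Sims, so there is no in-paper argument to compare against; yours is the expected one and is complete.
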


\begin{definition} (Compare \cite[Definition 4]{Mustafa})
 A $G$-metric space $(X,G,K)$ is said to be symmetric if 
 $$G(x,y,y) = G(x,x,y), \ \text{for all } x,y \in X.$$

\end{definition}

\begin{definition} (Compare \cite[Definition 9]{Mustafa})
 A $G$-metric space $(X,G)$ is said to be $G$-complete if every $G$-Cauchy sequence in $(X,G)$ is $G$-convergent. 

\end{definition}

We shall also make use of Proposition 1 from \cite{Mustafa}.

\section{Main results}


\begin{definition} 
A self mapping $T$ defined on a $G$-metric space $(X,G)$ is said to be \textbf{orbitally continuous} if and only if $\lim_{i\to \infty}T^{n_i}x=x^* \in X$ implies $Tx^*=\lim_{i\to \infty}TT^{n_i}x$.
\end{definition}

\begin{definition}
Let $T$ be a self mapping defined on a $G$-metric space $(X,G)$. The space $(X,G)$ is said to be $T$-\textbf{orbitally complete} if only if for any $a\in X$ every $G$-Cauchy sequence wchich is contained in $I(a,T):=\{a,Ta,T^2a,T^3a, \cdots\}$ $G$-converges in $X$.
\end{definition}

\vspace*{0.25cm}

\subsection{First results}

\begin{theorem}\label{thm1}
Let $(X,G)$ be a symmetric $G$-metric space and $T$ a self mapping on $X$. If $X$ is $T$-orbitally complete and $T$ is an orbitally continous map which is injective and satisfies

\begin{align}\label{condition}
G(Tx,Ty,Tz) < & q. \max \{ G(x,y,z),a(x,y,z)G(Tx,y,z)G(x,Ty,z)G(x,y,Tz),\nonumber \\
& [G(x,y,z)G(Tx,Ty,Tz)]^{-1}G(x,Tx,Tx)G(y,Ty,Ty)G(z,Tz,Tz) \}
\end{align}

for all $x,y,z\in X, x\neq y$ and $q<1$, where $a(x,y,z)$ is a non-negative real valued function. Then for each $x\in X$, $\lim_{n\to\infty}T^nx=u_x\in X$ and $Tu_x=u_x$. If in addition $a(x,y,z) \leq [G(x,y,z)G(Tx,Ty,Tz)]^{-1},$ then $T$ has a unique fixed point.
\end{theorem}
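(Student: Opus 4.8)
The plan is to fix an arbitrary $x\in X$, form the orbit sequence $x_n=T^nx$, and show it is $G$-Cauchy inside $I(x,T)$; $T$-orbital completeness then yields a limit $u_x$, and orbital continuity forces $Tu_x=u_x$. The crucial preliminary step is to show that the ``correction'' terms in \eqref{condition} collapse along the orbit so that effectively $G(x_{n+1},x_{n+2},x_{n+3})\le q\,G(x_n,x_{n+1},x_{n+2})$, or at least some comparable contraction on consecutive-orbit triples. I would set $g_n:=G(x_n,x_{n+1},x_{n+2})$ (using symmetry of $G$ freely) and, plugging $x=x_n,y=x_{n+1},z=x_{n+2}$ into \eqref{condition}, analyse the three candidates inside the $\max$: the first is $g_n$; the second, $a(\cdot)\,G(Tx_n,x_{n+1},x_{n+2})G(x_n,Tx_{n+1},x_{n+2})G(x_n,x_{n+1},Tx_{n+2})$, rewrites using $Tx_n=x_{n+1}$ etc., and after using symmetry and (G3)-type bounds should be controlled by a product that telescopes; the third term, $[G(x_n,x_{n+1},x_{n+2})G(x_{n+1},x_{n+2},x_{n+3})]^{-1}G(x_n,x_{n+1},x_{n+1})G(x_{n+1},x_{n+2},x_{n+2})G(x_{n+2},x_{n+3},x_{n+3})$, by symmetry of the space equals $g_{n+1}\cdot g_n/(g_n g_{n+1})=$ a ratio that I expect to simplify to something $\le g_n$ or $\le g_{n+1}$. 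Matching these cases should give $g_{n+1}<q\max\{g_n,g_{n+1}\}$, and since $q<1$ this is impossible unless the $\max$ is $g_n$, hence $g_{n+1}<q\,g_n$ and inductively $g_n<q^n g_0$.

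Once $g_n\to 0$ geometrically, I would deduce that $(x_n)$ is $G$-Cauchy: from $g_n<q^ng_0$ and (G3) one gets $G(x_n,x_{n+1},x_{n+1})\le g_n$, hence $d_G(x_n,x_{n+1})\le 2g_n$ (using symmetry), so $\sum_n d_G(x_n,x_{n+1})<\infty$ and $(x_n)$ is $d_G$-Cauchy; by Proposition 2 it is $G$-Cauchy. Since every $x_n\in I(x,T)$, $T$-orbital completeness gives $x_n\to u_x$ for some $u_x\in X$. Because $x_n=T^nx$ and $T^{n+1}x\to u_x$ as well, orbital continuity of $T$ (applied with $n_i=i$, $x^*=u_x$) yields $Tu_x=\lim_i TT^{i}x=\lim_i x_{i+1}=u_x$. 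This establishes the first assertion.

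For the final clause, assume additionally $a(x,y,z)\le[G(x,y,z)G(Tx,Ty,Tz)]^{-1}$ and suppose $u,v$ are two distinct fixed points, so $Tu=u$, $Tv=v$, $u\ne v$. I would apply \eqref{condition} with $x=u$, $y=v$, and $z$ chosen as one of $u$ or $v$ (say $z=v$, which keeps $x\ne y$), giving $G(u,v,v)=G(Tu,Tv,Tv)<q\max\{\,G(u,v,v),\ a(u,v,v)G(u,v,v)G(u,v,v)G(u,v,v),\ [\ \cdot\ ]^{-1}G(u,u,u)G(v,v,v)G(v,v,v)\,\}$. The third term vanishes since $G(u,u,u)=0$ by (G1); the second is $\le [G(u,v,v)G(u,v,v)]^{-1}G(u,v,v)^3=G(u,v,v)$ by the hypothesis on $a$; so the $\max$ is $G(u,v,v)$, yielding $G(u,v,v)<q\,G(u,v,v)$, a contradiction with $q<1$ since $G(u,v,v)>0$ by (G2). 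Hence $u=v$.

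The main obstacle I anticipate is the bookkeeping in the first paragraph: verifying rigorously that the second and third terms of the $\max$ along the orbit are genuinely dominated (or at worst produce an inequality of the form $g_{n+1}<q\max\{g_n,g_{n+1}\}$ with no other surviving term), since this requires careful use of symmetry together with (G3) and (G5) to relate quantities like $G(x_n,x_{n+1},x_{n+2})$ to the ``diagonal'' values $G(x_i,x_{i+1},x_{i+1})$, and one must also make sure the reciprocal factors are never $0/0$ (i.e. handle the degenerate case $g_n=0$ for some $n$ separately, which immediately forces $x_n=x_{n+1}=x_{n+2}$ and hence a fixed point by injectivity/orbital continuity). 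Injectivity of $T$ is presumably what rules out spurious periodic behaviour when some $g_n$ vanishes prematurely.
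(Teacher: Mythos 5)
There is a genuine gap at the heart of your first paragraph, and it is exactly the obstacle you flag at the end without resolving. In the first assertion of the theorem the function $a(x,y,z)$ is \emph{completely arbitrary} (only non-negative); the hypothesis $a(x,y,z)\le[G(x,y,z)G(Tx,Ty,Tz)]^{-1}$ is only assumed for the uniqueness clause. Hence the second entry of the $\max$, namely $a(x,y,z)G(Tx,y,z)G(x,Ty,z)G(x,y,Tz)$, can only be controlled by forcing one of the three $G$-factors to vanish. Your substitution $x=x_n$, $y=x_{n+1}$, $z=x_{n+2}$ gives $G(Tx_n,x_{n+1},x_{n+2})=G(x_{n+1},x_{n+1},x_{n+2})$, $G(x_n,Tx_{n+1},x_{n+2})=G(x_n,x_{n+2},x_{n+2})$, $G(x_n,x_{n+1},Tx_{n+2})=G(x_n,x_{n+1},x_{n+3})$, none of which is zero in general, so the second term is unbounded and no inequality of the form $g_{n+1}<q\max\{g_n,g_{n+1}\}$ can be extracted. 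The paper's proof avoids this by the degenerate substitution $y=z=Tx$ (i.e.\ working with $G(T^nx,T^{n+1}x,T^{n+1}x)$ rather than your $g_n=G(x_n,x_{n+1},x_{n+2})$): then $G(Tx,y,z)=G(Tx,Tx,Tx)=0$ by (G1), the second term dies, the third term collapses to $G(Tx,T^2x,T^2x)$ itself, and one gets $G(Tx,T^2x,T^2x)<q\,G(x,Tx,Tx)$ directly, whence $G(T^nx,T^{n+p}x,T^{n+p}x)\le\frac{q^n}{1-q}G(x,Tx,Tx)$ by (G5). A secondary issue with your route: even ignoring $a$, your third term is bounded by $g_{n+2}$, not by $\max\{g_n,g_{n+1}\}$, so the case analysis you propose does not close.

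The remainder of your argument is sound and matches the paper: geometric decay of the consecutive distances gives $G$-Cauchyness of the orbit, $T$-orbital completeness gives a limit $u_x$, orbital continuity gives $Tu_x=u_x$, and your uniqueness argument (with $x=u$, $y=v$, $z=v$, using $G(u,u,u)=0$ to kill one term and the bound on $a$ to reduce the other to $G(u,v,v)$) is correct and is essentially the paper's. But the first, essential step needs to be redone with the diagonal substitution $y=z=Tx$.
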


\begin{proof}
Let $x\in X$ and assume that $Tx\neq x$. Then by \eqref{condition}, we have

\begin{align*}
G(Tx,T^2x,T^2x) < &  q \max \{ G(x,Tx,Tx),0,\\
& [G(x,Tx,Tx)G(Tx,T^2x,T^2x)]^{-1}G(x,Tx,Tx)G(Tx,T^2x,T^2x)G(Tx,T^2x,T^2x) \}\\
               = & q \max \{ G(x,Tx,Tx),G(Tx,T^2x,T^2x) \}
\end{align*}

and hence $$ G(Tx,T^2x,T^2x)<   q. G(x,Tx,Tx).$$

\newpage

If $Tx=x$, then $ G(Tx,T^2x,T^2x)=0\leq q.G(x,Tx,Tx)$, and therefore

\begin{align}\label{usual}
 G(Tx,T^2x,T^2x)\leq q.G(x,Tx,Tx).
\end{align}

Again, we have 

\begin{align}\label{usual1}
 G(T^2x,T^3x,T^3x)\leq q.G(Tx,T^2x,T^2x)\leq q^2.G(x,Tx,Tx).
\end{align}

By usual procedure from \eqref{usual} and \eqref{usual1}, it follws that for any $p \in \mathbb{N}$
\begin{align*}
 G(T^nx,T^{n+p}x,T^{n+p}x)\leq \frac{q^n}{1-q} G(x,Tx,Tx).
\end{align*}
Since $q<1$, it follows that $\{x,Tx,T^2x,\cdots,T^nx,\cdots\}$ is a $G$-Cauchy sequence. By $T$-orbitally completeness of $X$, there exits $u_x\in X$ such that $T^nx$ $G$-converges to $u_x.$
Moreover, since $T$ is orbitally continuous, we have $$Tu_x=\lim_{n\to \infty} T^{n+1}x=u_x.$$

Hence the first part of the Theorem is proved.

Let now  $a(x,y,z) \leq [G(x,y,z)G(Tx,Ty,Tz)]^{-1}$ and suppose that $u=Tu$, $v=Tv$ and $u\neq v$. Then

\begin{align*}
G(u,v,v) = G(Tu,Tv,Tv) &< q. \max \{G(u,v,v), 0,[G(u,v,v)G(u,v,v)]^{-1}[G(u,v,v)]^3 \}\\
&= q G(u,v,v),
\end{align*}

which is a contradiction with $q<1.$
This completes the proof.
\end{proof}

\vspace*{0.5cm}

\begin{theorem}\label{thm2}
Let $(X,G)$ be a symmetric $G$-metric space and $T$ an orbitally continous self mapping on $X$ which is injective and satisfies
\begin{align}\label{condition1}
G(Tx,Ty,Tz) < &  \max \{ G(x,y,z),a(x,y,z)G(Tx,y,z)G(x,Ty,z)G(x,y,Tz),\nonumber \\
& [G(x,y,z)G(Tx,Ty,Tz)]^{-1}G(x,Tx,Tx)G(y,Ty,Ty)G(z,Tz,Tz)\}
\end{align}

for all $x,y,z\in X, x\neq y$, where $a(x,y,z)$ is a non-negative real valued function\footnote{This is the case of maps which satisfy \eqref{condition} with $q=1$.}. Then if for some $x_0\in X$, $\{T^nx_0\}$ has a cluster point $u\in X$ then $u$ is a fixed point of $T$ and $\{T^nx_0\}$ $G$-converges to $u.$
\end{theorem}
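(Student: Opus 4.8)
The plan is to mimic the scheme used for Theorem \ref{thm1}, but to exploit the cluster point hypothesis in place of $T$-orbital completeness. First I would observe that the one-step estimate $G(Tx,T^2x,T^2x)\le G(x,Tx,Tx)$ (the $q=1$ analogue of \eqref{usual}) still holds by the same computation: plugging $y=z=Tx$ into \eqref{condition1}, the middle term of the maximum vanishes and the last term simplifies to $G(Tx,T^2x,T^2x)$, forcing $G(Tx,T^2x,T^2x)<\max\{G(x,Tx,Tx),G(Tx,T^2x,T^2x)\}$, which is only possible if $G(Tx,T^2x,T^2x)<G(x,Tx,Tx)$ (when $Tx\ne x$), and trivially if $Tx=x$. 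Hence the sequence $c_n:=G(T^nx_0,T^{n+1}x_0,T^{n+1}x_0)$ is non-increasing, so it converges to some $c\ge 0$.

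Next I would bring in the cluster point. Let $u$ be a cluster point of $\{T^nx_0\}$, so there is a subsequence $T^{n_i}x_0\to u$. By the symmetry of $G$ and Proposition \ref{prop1}, $G$-convergence gives $G(T^{n_i}x_0,T^{n_i+1}x_0,T^{n_i+1}x_0)\to G(u,Tu^{-},\ldots)$ — more precisely, since $T$ is orbitally continuous, $T^{n_i+1}x_0=TT^{n_i}x_0\to Tu$, and then also $T^{n_i+2}x_0\to T^2u$ by a second application of orbital continuity. Continuity of $G$ in the $G$-metric topology (which follows from (G5) and Proposition \ref{prop1}) then yields $c_n\to G(u,Tu,Tu)$ along the full sequence, i.e.\ $c=G(u,Tu,Tu)$, and also $G(Tu,T^2u,T^2u)=\lim c_{n_i+1}=c$ as well. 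The crux is then to show $c=0$: if $Tu\ne T^2u$ we would get from the one-step inequality the \emph{strict} bound $G(Tu,T^2u,T^2u)<G(u,Tu,Tu)$, i.e.\ $c<c$, a contradiction; hence $Tu=T^2u$, and since $T$ is injective, $Tu=u$. (Alternatively, $Tu=u$ forces $c=0$ directly from $c=G(u,Tu,Tu)=G(u,u,u)=0$ via (G1).)

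Finally, once $Tu=u$ is established, I would upgrade "cluster point" to "limit". Apply \eqref{condition1} with $x=T^nx_0$, $y=z=u$: the middle term vanishes because it contains the factor $G(u,Tu,u)=G(u,u,u)=0$, the last term contains the factor $G(u,Tu,Tu)=0$, so we get $G(T^{n+1}x_0,u,u)=G(T(T^nx_0),Tu,Tu)<G(T^nx_0,u,u)$ whenever $T^nx_0\ne u$. Thus $d_n:=G(T^nx_0,u,u)$ is strictly decreasing (until it hits $0$) and hence converges to some $d\ge 0$; evaluating along the subsequence $n_i$ and using $T^{n_i}x_0\to u$ with continuity of $G$ gives $d=G(u,u,u)=0$, so $G(T^nx_0,u,u)\to 0$ and, by Proposition \ref{prop1}(v), $T^nx_0$ is $G$-convergent to $u$.

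The main obstacle I anticipate is the passage $c<c$: it requires knowing $c=G(u,Tu,Tu)$ \emph{and} $c=G(Tu,T^2u,T^2u)$ simultaneously, which in turn needs the continuity of the three-variable function $G$ under $G$-convergence together with two nested applications of orbital continuity; one must be careful that orbital continuity as stated only applies to orbit sequences $T^{n_i}x$, which is exactly the situation here. Establishing this joint-continuity/limit-identification step cleanly is where the argument must be done with care; everything else is a routine repetition of the monotone-sequence trick already used in Theorem \ref{thm1}.
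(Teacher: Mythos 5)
Your argument for the fixed-point part --- monotonicity of $c_n=G(T^nx_0,T^{n+1}x_0,T^{n+1}x_0)$, identification of its limit as both $G(u,Tu,Tu)$ and $G(Tu,T^2u,T^2u)$ via two applications of orbital continuity plus the continuity of $G$ under $G$-convergence, and the contradiction $c<c$ obtained from the strict inequality when $u\neq Tu$ --- is essentially the paper's own proof and is correct. (The paper phrases the last step as ``assume $u\neq Tu$'' rather than ``assume $Tu\neq T^2u$''; by injectivity these are equivalent, so this is only a cosmetic difference.)

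The final paragraph, where you try to upgrade the cluster point to a limit, has a genuine gap. With $x=T^nx_0$, $y=z=u$ and $Tu=u$, the middle entry of the maximum in \eqref{condition1} is
$a(T^nx_0,u,u)\,G(T^{n+1}x_0,u,u)\,G(T^nx_0,Tu,u)\,G(T^nx_0,u,Tu)
=a(T^nx_0,u,u)\,G(T^{n+1}x_0,u,u)\,[G(T^nx_0,u,u)]^2$;
none of its factors is $G(u,Tu,u)$, so it does not vanish as you claim. Since Theorem \ref{thm2} places no upper bound on $a$ (the bound $a\leq[G(x,y,z)G(Tx,Ty,Tz)]^{-1}$ appears only in the uniqueness clauses elsewhere), the maximum may well be attained by this middle term, in which case \eqref{condition1} yields only $1<a(T^nx_0,u,u)[G(T^nx_0,u,u)]^2$ and not $d_{n+1}<d_n$. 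So the monotonicity of $d_n=G(T^nx_0,u,u)$ does not follow, and convergence of the full orbit to $u$ is not established by this route. For what it is worth, the paper's own proof silently omits this part of the statement too: it stops at $Tu=u$ and never addresses the claimed $G$-convergence of $\{T^nx_0\}$ to $u$. Your instinct that an extra argument is needed is right, but the specific cancellation you invoke is not available.
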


\begin{proof}\hspace*{0.3cm}

\vspace*{0.3cm}

If $T^nx_0= T^{n+1}x_0$ for some $n\in \mathbb{N}$, then $u=T^nx_0$ and the proof is complete.

Assume now that $T^nx_0\neq T^{n-1}x_0$ for all $n=1,2,3,\cdots,$ and let $$\lim_{i\to \infty} T^{n_i}x_0=u.$$

\newpage

Then by \eqref{condition1}, we have

\begin{align*}
G(T^{n}x_0,T^{n+1}x_0,T^{n+1}x_0)& < \max \{ G(T^{n-1}x_0,T^{n}x_0,T^{n}x_0),0,\\
&[G(T^{n-1}x_0,T^{n}x_0,T^{n}x_0)G(T^{n}x_0,T^{n+1}x_0,T^{n+1}x_0)]^{-1}\\
&G(T^{n-1}x_0,T^{n}x_0,T^{n}x_0)G(T^{n}x_0,T^{n+1}x_0,T^{n+1}x_0)\\
& G(T^{n}x_0,T^{n+1}x_0,T^{n+1}x_0)
\}.
\end{align*}
Hence 
\[
G(T^{n}x_0,T^{n+1}x_0,T^{n+1}x_0) < G(T^{n-1}x_0,T^{n}x_0,T^{n}x_0),
\]
as $G(T^{n}x_0,T^{n+1}x_0,T^{n+1}x_0) < G(T^{n}x_0,T^{n+1}x_0,T^{n+1}x_0)$ is impossible. Therefore, the sequence
\[
\{ G(T^{n}x_0,T^{n+1}x_0,T^{n+1}x_0)\}
\]
is a decreasing sequence of positive reals and hence convergent. Since $\lim_{i\to \infty} T^{n_i}x_0=u$ and $T$ is orbitally continuous, it follows that $Tu= \lim_{i\to \infty} T^{n_i+1}x_0$, $T^2u=\lim_{i\to \infty} T^{n_i+2}x_0$ and 

\begin{equation}\label{3}
\lim_{i\to \infty}G(T^{n_i}x_0,T^{n_i+1}x_0,T^{n_i+1}x_0)= G(u,Tu,Tu),
\end{equation}

\begin{equation}\label{4}
\lim_{i\to \infty}G(T^{n_i+1}x_0,T^{n_i+2}x_0,T^{n_i+2}x_0)= G(Tu,T^2u,T^2u).
\end{equation}

Since $\{ G(T^{n}x_0,T^{n+1}x_0,T^{n+1}x_0)\}$ is a convergent sequence and $\{G(T^{n_i}x_0,T^{n_i+1}x_0,T^{n_i+1}x_0) \}$  and $\{G(T^{n_i+1}x_0,T^{n_i+2}x_0,T^{n_i+2}x_0) \}$ are two of its subsequences, it follows from \eqref{3} and \eqref{4} that 

\[
\lim_{n\to \infty}G(T^{n}x_0,T^{n+1}x_0,T^{n+1}x_0)=G(u,Tu,Tu)=G(Tu,T^2u,T^2u).
\]

Therefore, we have

\begin{equation}\label{5}
G(u,Tu,Tu)=G(Tu,T^2,T^2u).
\end{equation}

If we assume that $u\neq Tu$, then by \eqref{condition1} we obtain 
\[
G(Tu,T^2u,T^2u)< G(u,Tu,Tu),
\]
which contradicts \eqref{5}. Therefore, we confidently conclude that $Tu=u$.
\end{proof}

\begin{corollary}
Let $X$ be a compact $G$-metric space and $T$ an injective and orbitally continous self mapping on $X$. If $T$ satisfies \eqref{condition1}, then for each $x\in X$, we have $\lim_{n\to\infty}T^nx=u_x\in X$ for some $=u_x\in X$ and $Tu_x=u_x$.   If in addition $a(x,y,z) \leq [G(x,y,z)G(Tx,Ty,Tz)]^{-1},$ then $T$ has a unique fixed point.
\end{corollary}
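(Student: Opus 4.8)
The plan is to reduce the statement to Theorem~\ref{thm2} by observing that a compact $G$-metric space automatically supplies the hypotheses that are missing from the corollary's statement, namely orbital completeness and the existence of a cluster point for the orbit. First I would verify that $X$ compact implies $X$ is $T$-orbitally complete: a $G$-Cauchy sequence inside $I(a,T)$ is in particular a sequence in the compact space $(X,G)$, hence it has a $G$-convergent subsequence, and a $G$-Cauchy sequence with a convergent subsequence is itself $G$-convergent (this last fact follows from the (G5) triangle-type inequality in exactly the same routine way as in ordinary metric spaces). So the orbital completeness hypothesis of the earlier theorems is free here.

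Next, fix $x\in X$ and consider the orbit $\{T^nx\}$. By compactness of $X$ this sequence has a cluster point $u\in X$; that is, there is a subsequence with $\lim_{i\to\infty}T^{n_i}x=u$. Now I would invoke Theorem~\ref{thm2} verbatim: $T$ is injective and orbitally continuous and satisfies \eqref{condition1} by hypothesis, so Theorem~\ref{thm2} applies with $x_0=x$ and gives both $Tu=u$ and $\lim_{n\to\infty}T^nx=u$. Setting $u_x:=u$ finishes the convergence-and-fixed-point part of the statement for each $x\in X$.

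For the uniqueness part under the extra assumption $a(x,y,z)\leq[G(x,y,z)G(Tx,Ty,Tz)]^{-1}$, I would repeat the short argument from the end of the proof of Theorem~\ref{thm1}: if $u=Tu$ and $v=Tv$ with $u\neq v$, then \eqref{condition1} at $(x,y,z)=(u,v,v)$ gives
\[
G(u,v,v)=G(Tu,Tv,Tv)<\max\{G(u,v,v),\,0,\,[G(u,v,v)G(u,v,v)]^{-1}[G(u,v,v)]^3\}=G(u,v,v),
\]
a contradiction; here the middle term vanishes because $a(u,v,v)G(Tu,v,v)$ already has the factor $G(Tu,v,v)=G(u,v,v)$ together with the factor $G(u,Tv,v)=G(u,v,v)$ and $G(u,v,Tv)=G(u,v,v)$, and these combine with $a(u,v,v)\le[G(u,v,v)^2]^{-1}$ to yield at most $G(u,v,v)$, so the maximum is $G(u,v,v)$ regardless. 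Hence all the limit points $u_x$ coincide, and $T$ has a unique fixed point.

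The only genuinely non-formal point is the first step — that $G$-Cauchy sequences in a compact $G$-metric space converge — which I expect to be the main (though still routine) obstacle; everything else is a direct citation of Theorem~\ref{thm2} and a transcription of the uniqueness computation from Theorem~\ref{thm1}. One should double-check that the $\max$ in \eqref{condition1} is being read with the convention that the $[G(x,y,z)G(Tx,Ty,Tz)]^{-1}$-type terms are interpreted as $0$ when a relevant $G$-value is $0$ (as the authors implicitly do in their proofs), so that the degenerate case $T^nx=T^{n+1}x$ is handled cleanly before passing to Theorem~\ref{thm2}.
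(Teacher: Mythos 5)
Your proof is correct and matches the paper's intended argument: the corollary is stated there without proof as an immediate consequence of Theorem~\ref{thm2}, with compactness supplying a cluster point for every orbit and the uniqueness computation transcribed from the end of Theorem~\ref{thm1}. The only remark is that your opening paragraph (compactness implies $T$-orbital completeness) is superfluous, since Theorem~\ref{thm2} needs only the cluster point and never invokes orbital completeness.
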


We now introduce the family $\mathcal{H}$ of functions that we shall use for the next result. For the terminology \textit{upper semicontinuous}, we shall use the short form \textit{usc}.
We have

\[
h \in \mathcal{H} \Longleftrightarrow h:([0,\infty))^3\to[0,\infty),\ h \text{ is usc and nondecreasing in each variable.} 
\]

\begin{lemma}\label{lemma1}
Let $h\in \mathcal{H}$, set $g(t)=h(t,t,t)$. $$ \text{For every } t>0,\ \  g(t)<t \text{ if and only if } \lim_{n\to \infty} g^n(t)=0.$$
\end{lemma}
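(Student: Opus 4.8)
The plan is to prove the two implications separately, with the forward direction (if $g(t)<t$ for all $t>0$, then $g^n(t)\to 0$) being the substantive one. First I would observe that $h\in\mathcal H$ forces $g(t)=h(t,t,t)$ to be nondecreasing on $[0,\infty)$ and usc, and that $g(0)\le g(t)$ for $t>0$; in fact by the hypothesis applied (in the limit sense) one checks $g(0)=0$, since if $g(0)>0$ then monotonicity would give $g(t)\ge g(0)>0$ for small $t$ in a way that clashes with $g(t)<t$ as $t\downarrow 0$ — more carefully, usc at $0$ gives $g(0)\ge\limsup_{t\to0^+}g(t)$, but $g(t)<t$ forces $\limsup_{t\to 0^+}g(t)=0$, whereas $g(0)\le g(t)<t$ also pins $g(0)=0$ directly by letting $t\to 0^+$. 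So $g(0)=0$ and $g^n(0)=0$, and the statement is only interesting for a fixed $t>0$.

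For such a fixed $t>0$, consider the sequence $t_n:=g^n(t)$ with $t_0=t$. By hypothesis $t_1=g(t)<t=t_0$. I would then argue by induction that $(t_n)$ is strictly decreasing as long as it stays positive: if $t_n\in(0,t_{n-1})$ and $t_n>0$, then $t_{n+1}=g(t_n)<t_n$ by the hypothesis applied at the point $t_n>0$. Hence $(t_n)$ is a decreasing sequence bounded below by $0$, so it converges to some limit $L:=\lim_n t_n\ge 0$, and $L\le t_n$ for all $n$. The key step is to show $L=0$. Suppose $L>0$. Then $t_n\downarrow L$, and using that $g$ is nondecreasing we get $t_{n+1}=g(t_n)\ge g(L)$ for all $n$ (since $t_n\ge L$), hence $L=\lim t_{n+1}\ge g(L)$; on the other hand, by upper semicontinuity of $g$ at $L$ along the sequence $t_n\to L$, $g(L)\ge\limsup_n g(t_n)=\limsup_n t_{n+1}=L$. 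Combining, $g(L)=L$, contradicting $g(L)<L$ since $L>0$. Therefore $L=0$, i.e.\ $\lim_{n\to\infty}g^n(t)=0$.

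For the converse, suppose $\lim_{n\to\infty}g^n(t)=0$ for every $t>0$, and fix $t>0$. I would argue by contradiction: if $g(t)\ge t$, then since $g$ is nondecreasing, an easy induction gives $g^{n+1}(t)=g(g^n(t))\ge g(t)\ge t>0$ for all $n$ (because $g^n(t)\ge t$ implies $g^{n+1}(t)\ge g(t)\ge t$), so $g^n(t)\not\to 0$, a contradiction. Hence $g(t)<t$.

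The main obstacle is the $L=0$ step: one must use upper semicontinuity correctly, since $g$ need not be continuous, and the monotone convergence $t_n\downarrow L$ must be combined with usc at the single point $L$ rather than continuity. Once that interplay (monotonicity gives $g(L)\le L$; usc gives $g(L)\ge L$) is set up, the rest is routine.
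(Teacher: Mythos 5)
Your proof is correct and follows essentially the same route as the paper: for the forward direction, show the orbit $g^n(t)$ is decreasing hence convergent to some $L$, then use upper semicontinuity to force $g(L)\ge L$, contradicting $g(L)<L$ unless $L=0$; for the converse, use monotonicity of $g$ to show $g(t)\ge t$ would keep the orbit bounded below by $t>0$. Your version is slightly more careful than the paper's (you justify that the limit exists before passing to it, and you handle $g(t^*)>t^*$ and $g(t^*)=t^*$ in one stroke), but the underlying argument is the same.
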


\begin{proof} \hspace*{0.3cm}

\vspace{0.3cm}

\textbf{Necessary condition:} \hspace*{0.3cm} 

\vspace{0.15cm}

 Since $h$ is usc, then so is $g$. Assume now that $lim_{n\to \infty} g^n(t)=K\neq 0.$ Then
 
 \[
 K = \lim_{n\to \infty} g^{n+1}(t) \leq g( \lim_{n\to \infty} g^n(t))= g(K)< K,
 \]
--a contradiction, therefore $K=0.$

\vspace{0.5cm}

\textbf{Sufficient condition:} \hspace*{0.3cm} 

\vspace{0.15cm}

 Since $h$ is non-decreasing, then so is $g$. Given that $\lim_{n\to \infty} g^n(t)=0,$ for every $t>0,$ assume that $g(t)>t$ for some $t^*>0$. Then $g^n(t^*)>t^*$ for $n=1,2,3,\cdots.$ Thus $$\lim_{n\to \infty} g^n(t^*)\geq t^*>0,$$
--a contradiction.

Moreover, if $g(t^*)=t^*$, then

$$\lim_{n\to \infty} g^n(t^*)= t^*>0.$$

Hence, for all $t>0, \ g(t)<t.$
\end{proof}

We now propose a generalization of Theorem \ref{thm1}.

\begin{theorem}\label{thm3}
Let $(X,G)$ be a symmetric $G$-metric space. If $X$ is $T$-orbitally complete and $T$ is an orbitally continous self map on $X$ which is injective and satisfies

\begin{align}\label{conditionthm3}
G(Tx,Ty,Tz) \leq & h( G(x,y,z),\nonumber\\
& [G(x,y,z)G(Tx,Ty,Tz)]^{-1}G(x,Tx,Tx)G(y,Ty,Ty)G(z,Tz,Tz),\nonumber\\
& a(x,y,z)G(Tx,y,z)G(x,Ty,z)G(x,y,Tz)) 
\end{align}
for all $x,y,z\in X, x\neq y$, where $a(x,y,z)$ is a non-negative real valued function such that $a(x,y,y)=0$ and for some $h\in \mathcal{H}$ such that $h(t,t,t)<t$ for all $t>0$. Then for each $x\in X$, $\lim_{n\to\infty}T^nx=u_x\in X$ and $Tu_x=u_x$. If in addition $a(x,y,z) \leq [G(x,y,z)G(Tx,Ty,Tz)]^{-1},$ then $T$ has a unique fixed point.

\end{theorem}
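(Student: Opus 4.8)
The plan is to mimic the proof of Theorem \ref{thm1}, replacing the explicit scalar contraction factor $q$ by the control function $g(t)=h(t,t,t)$ and invoking Lemma \ref{lemma1} to get the Cauchy estimate. First I would fix $x\in X$, write $x_n=T^nx$, and suppose $x_n\neq x_{n+1}$ for all $n$ (otherwise a tail of the orbit is constant and $Tu_x=u_x$ is immediate). Applying \eqref{conditionthm3} with the triple $(x,Tx,Tx)$ and using $a(x,y,y)=0$ to kill the middle term of the right-hand side of \eqref{conditionthm3}, together with the symmetry hypothesis $G(x,y,y)=G(x,x,y)$, the second argument of $h$ collapses: $[G(x,Tx,Tx)G(Tx,T^2x,T^2x)]^{-1}G(x,Tx,Tx)G(Tx,T^2x,T^2x)G(Tx,T^2x,T^2x)=G(Tx,T^2x,T^2x)$, exactly as in the computation after \eqref{condition}. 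Hence $G(Tx,T^2x,T^2x)\le h\big(G(x,Tx,Tx),G(Tx,T^2x,T^2x),0\big)$; since $h$ is nondecreasing in each variable, if $G(Tx,T^2x,T^2x)>G(x,Tx,Tx)$ we would get $G(Tx,T^2x,T^2x)\le h(s,s,s)=g(s)<s$ with $s=G(Tx,T^2x,T^2x)$, a contradiction, so $G(Tx,T^2x,T^2x)\le G(x,Tx,Tx)$ and therefore $G(Tx,T^2x,T^2x)\le g\big(G(x,Tx,Tx)\big)$.

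Iterating, I would set $t_0=G(x,Tx,Tx)$ and obtain $G(T^nx,T^{n+1}x,T^{n+1}x)\le g^n(t_0)$. By Lemma \ref{lemma1}, since $g(t)<t$ for all $t>0$, we have $g^n(t_0)\to 0$. To upgrade the consecutive-term estimate to a Cauchy estimate I would use axiom (G5) together with (G3): for $m>n$,
\[
G(T^nx,T^mx,T^mx)\le \sum_{k=n}^{m-1} G(T^kx,T^{k+1}x,T^{k+1}x)\le \sum_{k=n}^{m-1} g^k(t_0),
\]
and because $g$ is nondecreasing with $g(t)<t$ one shows the series $\sum_k g^k(t_0)$ converges (the partial sums are dominated, e.g., by comparing $g^{k}(t_0)$ with a geometric tail via the standard argument that $g^k(t_0)$ is strictly decreasing to $0$; alternatively one argues directly that the partial sums form a Cauchy sequence using $g^k(t_0)\to 0$ is \emph{not} by itself enough, so care is needed here). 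Granting summability of the tail, $\{T^nx\}$ is a $G$-Cauchy sequence contained in $I(x,T)$, so by $T$-orbital completeness it $G$-converges to some $u_x\in X$; by orbital continuity $Tu_x=\lim_n T^{n+1}x=u_x$.

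For the uniqueness clause, assume $a(x,y,z)\le[G(x,y,z)G(Tx,Ty,Tz)]^{-1}$ and suppose $u=Tu$, $v=Tv$, $u\neq v$. Apply \eqref{conditionthm3} to the triple $(u,v,v)$: the first argument of $h$ is $G(u,v,v)$; the second argument is $[G(u,v,v)G(u,v,v)]^{-1}G(u,Tu,Tu)G(v,Tv,Tv)G(v,Tv,Tv)=0$ since $G(u,Tu,Tu)=G(u,u,u)=0$; and the third argument, $a(u,v,v)G(Tu,v,v)G(u,Tv,v)G(u,v,Tv)$, equals $0$ because $a(x,y,y)=0$. Hence $G(u,v,v)\le h(G(u,v,v),0,0)\le h(s,s,s)=g(s)<s$ with $s=G(u,v,v)>0$, a contradiction. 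Therefore the fixed point is unique.

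The main obstacle I anticipate is the passage from $g^n(t_0)\to 0$ to the convergence of $\sum_n g^n(t_0)$ (hence to the Cauchy property): unlike the geometric case $q^n$ in Theorem \ref{thm1}, a general nondecreasing $g$ with $g(t)<t$ need not make $\sum g^n(t_0)$ converge, so one must either impose/extract an extra growth condition on $h$ or argue the Cauchy property more carefully (for instance by a contradiction argument on $\limsup_{m,n}G(T^nx,T^mx,T^mx)$ using upper semicontinuity of $h$, in the spirit of Boyd–Wong). The remaining steps — collapsing the auxiliary arguments of $h$ via $a(x,y,y)=0$ and symmetry, the monotonicity manipulations, orbital continuity, and uniqueness — are routine once that estimate is in hand.
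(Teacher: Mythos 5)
Your consecutive-term estimate $G(T^nx,T^{n+1}x,T^{n+1}x)\le g^n\bigl(G(x,Tx,Tx)\bigr)$ and your uniqueness argument both match the paper (in fact your uniqueness step is slightly cleaner, since you kill the third argument of $h$ via $a(x,y,y)=0$ rather than via the bound on $a$). But the Cauchy step as you wrote it has a genuine gap, and you correctly sensed it: the summability of $\sum_k g^k(t_0)$ simply does not follow from $h\in\mathcal{H}$ and $h(t,t,t)<t$. The paper's own Example is a counterexample to your route: there $h(x,y,z)=x(x+1)^{-1}$, so $g(t)=t/(1+t)$ and $g^n(t_0)=t_0/(1+nt_0)\sim 1/n$, whose series diverges. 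So the chain $G(T^nx,T^mx,T^mx)\le\sum_{k=n}^{m-1}g^k(t_0)$ cannot yield the Cauchy property for the very maps the theorem is meant to cover, and no ``extra growth condition on $h$'' is available in the hypotheses.

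The fix you mention parenthetically is exactly what the paper does, and it must be carried out rather than deferred. Assume $\{T^{2n}x\}$ is not $G$-Cauchy; then there is $\varepsilon>0$ and indices $2k\le 2n(k)<2m(k)$ with $G(x_{2n(k)},x_{2m(k)},x_{2m(k)})>\varepsilon$, with $2m(k)$ chosen minimal so that $G(x_{2n(k)},x_{2m(k)-2},x_{2m(k)-2})\le\varepsilon$. Using (G5) and the fact that consecutive distances $G_n\to 0$, one shows $G(x_{2n(k)},x_{2m(k)},x_{2m(k)})\to\varepsilon$ and likewise for the shifted quantities $G(x_{2n(k)},x_{2m(k)-1},x_{2m(k)-1})$ and $G(x_{2n(k)+1},x_{2m(k)-1},x_{2m(k)-1})$. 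Applying \eqref{conditionthm3} to the pair of indices $2n(k)$ and $2m(k)-1$ (where the hypothesis $a(x,y,y)=0$ again annihilates the third argument) and letting $k\to\infty$, upper semicontinuity and monotonicity of $h$ give $\varepsilon\le h(\varepsilon,0,0)\le h(\varepsilon,\varepsilon,\varepsilon)<\varepsilon$, a contradiction. This Boyd--Wong-style argument is the essential content of the theorem beyond Theorem \ref{thm1}; without it your proof proves a strictly weaker statement.
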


\begin{proof}
Let $x$ be any point in $X$. Suppose that $Tx\neq x$. 

From \eqref{conditionthm3}, we have:
\[
G(Tx,T^2x,T^2x) \leq h(G(x,Tx,Tx),G(Tx,T^2x,T^2x),0).
\]

Since $G(x,Tx,Tx)< G(Tx,T^2x,T^2x)$ leads to 
\[
G(Tx,T^2x,T^2x) \leq h(G(Tx,T^2x,T^2x),G(Tx,T^2x,T^2x),G(Tx,T^2x,T^2x))< G(Tx,T^2x,T^2x),
\]
--a contradiction, we conclude that 

\[
G(Tx,T^2x,T^2x) \leq G(x,Tx,Tx).
\]

\newpage 

Therefore, we get that

\begin{equation}\label{lem1}
G(Tx,T^2x,T^2x) \leq h(G(x,Tx,Tx),G(x,Tx,Tx),G(x,Tx,Tx))=g(G(x,Tx,Tx)).
\end{equation}

Similarly, we have that
\[
G(T^2x,T^3x,T^3x) \leq h(G(Tx,T^2x,T^2x),G(T^2x,T^3x,T^3x) ,0).
\]

and since $ G(T^2x,T^3x,T^3x) \leq G(Tx,T^2x,T^2x)$ one gets that 

\[
G(T^2x,T^3x,T^3x) \leq g(G(Tx,T^2x,T^2x)) \leq g^2(G(x,Tx,Tx)).
\]

More generally
\[
G(T^nx,T^{n+1}x,T^{n+1}x) \leq g^n(G(x,Tx,Tx)).
\]

From Lemma \ref{lemma1}, we know that $\lim_{n\to \infty} g^n(t)=0$ for $t>0$ and hence
\begin{align}\label{r3}
\lim_{n\to \infty}G(T^nx,T^{n+1}x,T^{n+1}x)=0.
\end{align}

Next, we prove that $\{T^nx\}$ is a $G$-Cauchy sequence, and to this aim, it is enough to prove that $\{T^{2n}x\}$ is a $G$-Cauchy sequence.

Let's set $G_n= G(x_n,x_{n+1},x_{n+1})$. Suppose now, by the way of contradiction, that $\{T^{2n}x\}$ is not a $G$-Cauchy sequence. Then there exists an $\varepsilon>0$ such that for each $2k,\ k\in \mathbb{N}$, there exist $2n(k)$ and $2m(k)$ with $2k\leq 2n(k) < 2m(k) $ such that 

\begin{align}\label{r4}
G(x_{2n(k)},x_{2m(k)},x_{2m(k)})> \varepsilon.
\end{align}
Moreover, let's assume that for each $2k,\ k\in \mathbb{N}$, $2m(k)$ is the least integer exceeding $2n(k)$ satisfying \eqref{r4}, that is 

\begin{align}\label{r5}
G(x_{2n(k)},x_{2m(k)-2},x_{2m(k)-2})\leq \varepsilon \ \text{     and  } \ G(x_{2n(k)},x_{2m(k)},x_{2m(k)})> \varepsilon.
\end{align}

Then, we have
\[
\varepsilon < G(x_{2n(k)},x_{2m(k)},x_{2m(k)}) \leq G(x_{2n(k)},x_{2m(k)-2},x_{2m(k)-2}) + G_{2m(k)-2}+ G_{2m(k)-1}.
\]

By \eqref{r3} and \eqref{r5}, we conclude that 

\begin{align}\label{6}
G(x_{2n(k)},x_{2m(k)},x_{2m(k)}) \longrightarrow \varepsilon \text{ as } k\to \infty.
\end{align}

From 
\[
G(x_{2n(k)},x_{2m(k)},x_{2m(k)}) \leq G(x_{2n(k)},x_{2m(k)-1},x_{2m(k)-1}) + G(x_{2m(k)-1},x_{2m(k)},x_{2m(k)})
\]
and 

\begin{align*}
G(x_{2n(k)},x_{2m(k)-1},x_{2m(k)-1}) & \leq  G(x_{2n(k)},x_{2m(k)},x_{2m(k)}) + G(x_{2m(k)},x_{2m(k)-1},x_{2m(k)-1}) \\
& = G(x_{2n(k)},x_{2m(k)},x_{2m(k)}) + G(x_{2m(k)-1},x_{2m(k)},x_{2m(k)}),
\end{align*}

we obtain that 

\[
|G(x_{2n(k)},x_{2m(k)-1},x_{2m(k)-1}) - G(x_{2n(k)},x_{2m(k)},x_{2m(k)}) | \leq G_{2m(k)-1}.
\]

\newpage

Simalrly, we can obtain 

\[
| G(x_{2n(k)+1},x_{2m(k)-1},x_{2m(k)-1})- G(x_{2n(k)},x_{2m(k)},x_{2m(k)})| \leq G_{2m(k)-1} + G_{2n(k)}.
\]

By \eqref{6}, as $k\to \infty,$ we have that

$$ G(x_{2n(k)},x_{2m(k)-1},x_{2m(k)-1}) \longrightarrow \varepsilon $$

and 

$$ G(x_{2n(k)+1},x_{2m(k)-1},x_{2m(k)-1}) \longrightarrow \varepsilon .$$

Setting $$p=G(x_{2n(k)},x_{2m(k)},x_{2m(k)}),\ \ q =G(x_{2n(k)},x_{2m(k)-1},x_{2m(k)-1}),$$
and $$ r=  G(x_{2n(k)+1},x_{2m(k)-1},x_{2m(k)-1}), $$ we get, using the trianle inequality that:

$$p \leq G_{2n(k)}+ G(x_{2n(k)+1},x_{2m(k)},x_{2m(k)}).$$

By \eqref{conditionthm3}

\begin{align*}
G(x_{2n(k)+1},x_{2m(k)},x_{2m(k)})\leq  &  h(q, (q.G(x_{2n(k)+1},x_{2m(k)},x_{2m(k)}))^{-1}G_{2n(k)}G_{2m(k)-1}G_{2m(k)-1},\\
& a(x_{2n(k)},x_{2m(k)-1},x_{2m(k)-1}).r.[G(x_{2n(k)},x_{2m(k)-1},      x_{2m(k)}]^2))\\
=   & h(q, (q.G(x_{2n(k)+1},x_{2m(k)},x_{2m(k)}))^{-1}G_{2n(k)}G_{2m(k)-1}G_{2m(k)-1},0) .
\end{align*}

Since $h$ is usc, as $n\to \infty,$ it follows that 

\[
\varepsilon \leq h(\varepsilon ,0,0 )\leq  h(\varepsilon ,\varepsilon,\varepsilon )<\varepsilon, 
\]
--a contradiction. Therefore, $\{T^nx\}$ is  $G$-Cauchy.

 By orbital completeness of $X$, there exits $u_x\in X$ such that $T^nx$ $G$-converges to $u_x.$
Moreover, since $T$ is orbitally continuous, we have $$Tu_x=\lim_{n\to \infty} T^{n+1}x=u_x.$$

Hence the first part of the Theorem is proved.

\vspace*{0.3cm}

Let now  $a(x,y,z) \leq [G(x,y,z)G(Tx,Ty,Tz)]^{-1}$ and suppose that $u=Tu$, $v=Tv$ and $u\neq v$. Then

\begin{align*}
G(u,v,v) = G(Tu,Tv,Tv) &< h (G(u,v,v), 0,[G(u,v,v)G(u,v,v)]^{-1}[G(u,v,v)]^3 )\\
&=  h (G(u,v,v), 0,G(u,v,v) )\\
&\leq  g (G(u,v,v)) \\
& <  G(u,v,v), 
\end{align*}
which is a contradiction unless $u=v$. This completes our proof.
\end{proof}

\begin{corollary}

Let $(X,G)$ be a symmetric $G$-metric space. If $T$ is an orbitally continous self map on $X$ which is injective and satisfies

\begin{align}\label{conditioncorthm3}
G(Tx,Ty,Tz) \leq & h( G(x,y,z),\nonumber\\
& [G(x,y,z)G(Tx,Ty,Tz)]^{-1}G(x,Tx,Tx)G(y,Ty,Ty)G(z,Tz,Tz),\nonumber\\
& a(x,y,z)G(Tx,y,z)G(x,Ty,z)G(x,y,Tz)) 
\end{align}
for all $x,y,z\in X, x\neq y$, where $a(x,y,z)$ is a non-negative real valued function and for some $h\in \mathcal{H}$ such that $h(t,t,t)<t$ for all $t>0$. Then if for some $x_0\in X$, $\{T^nx_0\}$ has a cluster point $u\in X$ then $u$ is a fixed point of $T$ and $\{T^nx_0\}$ $G$-converges to $u.$

\end{corollary}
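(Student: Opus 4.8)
The plan is to mirror the proof of Theorem \ref{thm2}, since the corollary is to Theorem \ref{thm3} exactly what Theorem \ref{thm2} is to Theorem \ref{thm1}: we drop the orbital completeness hypothesis and instead assume directly that the orbit $\{T^nx_0\}$ possesses a cluster point $u$. First I would dispose of the trivial case: if $T^nx_0 = T^{n+1}x_0$ for some $n$, then $T^nx_0$ is a fixed point and, by injectivity of $T$, the whole tail of the orbit collapses, so $u = T^nx_0$ and convergence is immediate. So assume $T^nx_0 \neq T^{n-1}x_0$ for all $n$ and write $x_n = T^nx_0$, $G_n = G(x_n, x_{n+1}, x_{n+1})$.

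Next I would extract, exactly as in the proof of Theorem \ref{thm3}, the inequality $G_{n+1} \leq g(G_n)$ for every $n$, using \eqref{conditioncorthm3} with the triple $(x = x_{n-1}, y = x_n, z = x_n)$: the middle slot of $h$ forces $G_{n+1} \leq G_n$ (the alternative $G_n < G_{n+1}$ yields $G_{n+1} \leq h(G_{n+1},G_{n+1},G_{n+1}) = g(G_{n+1}) < G_{n+1}$, a contradiction), and then monotonicity of $h$ together with the $a(x,y,y)$-term contributing $0$ in the third slot gives $G_{n+1} \leq h(G_n, G_n, G_n) = g(G_n)$. Hence $(G_n)$ is nonincreasing; being bounded below by $0$ it converges to some limit $L \geq 0$. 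Here, rather than invoking Lemma \ref{lemma1} (which needs $g^n(t)\to 0$, unavailable without orbital completeness to even build the Cauchy sequence argument), I would use the cluster point directly: along a subsequence $n_i$ with $x_{n_i} \to u$, orbital continuity gives $x_{n_i+1} = TT^{n_i}x_0 \to Tu$ and $x_{n_i+2} \to T^2u$, whence by Proposition \ref{prop1} (the equivalence of $G$-convergence with convergence of $G(u, x_n, x_n)$ etc.) we get $G_{n_i} \to G(u,Tu,Tu)$ and $G_{n_i+1} \to G(Tu,T^2u,T^2u)$. Since both are subsequences of the convergent sequence $(G_n)$, they share the limit $L$, so $G(u,Tu,Tu) = G(Tu,T^2u,T^2u) = L$.

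The conclusion $Tu = u$ then follows by contradiction: if $Tu \neq u$, apply \eqref{conditioncorthm3} to the triple $(u, Tu, Tu)$. The third slot has the factor $a(u,Tu,Tu) = 0$, so it vanishes; the middle slot is $[G(u,Tu,Tu)G(Tu,T^2u,T^2u)]^{-1}G(u,Tu,Tu)G(Tu,T^2u,T^2u)G(Tu,T^2u,T^2u) = G(Tu,T^2u,T^2u) = L$; and the first slot is $G(u,Tu,Tu) = L$. Hence $L = G(Tu,T^2u,T^2u) \leq h(L, L, L) = g(L) < L$ (using $L>0$ from $Tu\neq u$ via (G2)), a contradiction. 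Therefore $Tu = u$. Finally, to see that the full sequence $\{T^nx_0\}$ $G$-converges to $u$ and not merely along the subsequence, I would observe that $G(u, x_{n+1}, x_{n+1}) = G(Tu, Tx_n, Tx_n) \leq h(G(u,x_n,x_n), [\cdots], 0)$ with the first argument $G(u,x_n,x_n)$ and the middle argument $[G(u,x_n,x_n)G(Tu,Tx_n,Tx_n)]^{-1}G(u,Tu,Tu)G(x_n,Tx_n,Tx_n)^{?}\cdots$ — since $G(u,Tu,Tu) = 0$, the middle argument is $0$ as well, so $G(u,x_{n+1},x_{n+1}) \leq g(G(u,x_n,x_n))$, and combined with $G(u,x_{n_i},x_{n_i}) \to 0$ along the cluster subsequence and monotonicity of $g$, the whole sequence $G(u,x_n,x_n)\to 0$.

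I expect the main obstacle to be the last step: justifying that $G(u,x_n,x_n) \to 0$ for \emph{all} $n$. One must be careful that the contractive condition \eqref{conditioncorthm3} is only assumed for $x \neq y$, so applying it with the pair $(u, x_n)$ requires $u \neq x_n$; but if $u = x_n$ for some $n$ then, since $Tu = u$, the orbit is eventually constant and there is nothing to prove. The other delicate point is handling the possibility that some $G(x,y,z)$ or $G(Tx,Ty,Tz)$ appearing in a denominator vanishes; as in the earlier proofs of the paper, one interprets such degenerate products as $0$ (they arise precisely when two of the three iterates coincide, the case already excluded), so the arithmetic is harmless. With these caveats the argument is a routine adaptation of the proof of Theorem \ref{thm2}.
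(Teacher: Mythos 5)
Your proposal is essentially the argument the paper intends: the corollary is left unproved, and its natural proof is the proof of Theorem \ref{thm2} transplanted to the $h$-setting, which is exactly what you do (monotonicity of $G_n=G(T^nx_0,T^{n+1}x_0,T^{n+1}x_0)$, identification of the two subsequential limits $G(u,Tu,Tu)=G(Tu,T^2u,T^2u)$ via orbital continuity, and a contradiction from the contractive condition applied to $(u,Tu,Tu)$). Two remarks. First, a small correction of bookkeeping: in the steps establishing $G_{n+1}\leq g(G_n)$ and in the final contradiction at $(u,Tu,Tu)$, the third argument of $h$ vanishes not because $a(x,y,y)=0$ (a hypothesis that appears in Theorem \ref{thm3} but is \emph{not} restated in the corollary) but because the factor $G(Tx,y,z)$ becomes $G(T^nx_0,T^nx_0,T^nx_0)=0$, resp. $G(Tu,Tu,Tu)=0$; so those steps are safe as stated. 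Second, your closing argument for $G$-convergence of the whole orbit --- showing $G(u,x_{n+1},x_{n+1})\leq g(G(u,x_n,x_n))$ so that the nonincreasing sequence $G(u,x_n,x_n)$ inherits the limit $0$ from the cluster subsequence --- is a genuine addition: the paper never proves this part of the conclusion, even in Theorem \ref{thm2}. But here the gap you flag is real: with $(x,y,z)=(u,x_n,x_n)$ the third slot is $a(u,x_n,x_n)\,G(u,x_n,x_n)\,G(u,x_{n+1},x_n)\,G(u,x_n,x_{n+1})$, which does not vanish without the assumption $a(x,y,y)=0$. So either that hypothesis must be imported from Theorem \ref{thm3} into the corollary (almost certainly the author's intent), or the convergence claim remains unjustified; with it, your argument closes completely, noting as you do that $g(0)=0$ follows from $h(0,0,0)\leq h(t,t,t)<t$ for all $t>0$.
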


We conclude this section by giving an example of a mapping that satisfies \eqref{condition} but not \eqref{conditionthm3}.

\begin{example}
Let $X=[0,\infty)$ with $G(x,y,z)= \max\{|x-y|,||y-z, |z-x|\}$. Define the mappings $T:x\to X$ by $Tx=x(x+1)^{-1}$ and $h:([0,\infty))^3\to[0,\infty)$ by $h(x,y,z)=x(x+1)^{-1}$. It is clear that $h$ satisfies all the conditions of Theorem \ref{conditionthm3}.
Furthermore, for any $e,b\in [0,\infty), e\neq b$
\[
|Te-Tb|= \frac{|e-b|}{1+e+b+eb} \leq \frac{|e-b|}{1+|e-b|}.
\]

So, for any $x,y,z\in [0,\infty), x\neq y$, we have
\begin{align*}
G(Tx,Ty,Tz)  \leq & \max \left\lbrace\frac{|x-y|}{1+|x-y|}, \frac{|x-z|}{1+|x-z|},\frac{|z-y|}{1+|z-y|} \right\rbrace \\
        =  & h(G(Tx,Ty,Tz),\\
        & [G(x,y,z)G(Tx,Ty,Tz)]^{-1}G(x,Tx,Tx)G(y,Ty,Ty)G(z,Tz,Tz),0),
\end{align*}
where $a(x,y,z)=0$. Hence \eqref{conditionthm3} holds.

Moreover, since $$\lim_{n\to \infty}T^nx=\lim_{n\to \infty}x(1+nx)^{-1}=0$$ implies that 
$$T0 = \lim_{n\to \infty} TT^nx,$$

$T$ is orbitaly continuous and $X$ is orbitally complete. It follows from Theorem \ref{conditionthm3} that $T$ has a unique fized point, which in this case is $0$.

However, $T$ does not satisfy \eqref{condition}. Indeed if it was the case, for some $q<1, $ and for all $x \in X, x\neq 0,$

\[
x(x+1)^{-1} = G(T0,Tx,Tx) < q .\max \{ x,0,0 \}=qx,
\]
which leads $(x+1)^{-1}<q$ for all $x \in X, x\neq 0$ and this is impossible. Hence $T$ does not satisfy \eqref{condition}.
\end{example}

\vspace*{0.25cm}

\subsection{Extensions}\hspace{0.5cm}

\vspace*{0.25cm}

This last section of the manuscript is devoted to some extensions of results from \'{C}iri\'{c}\cite{ciric} and Rhoades \cite{ro}. They present more general cases  of the results discussed in previous sections.

\begin{theorem}\label{extension}
Let $T$ be a self mapping on a symmetric $G$-metric space $(X,G)$ and $X$ be $T$-orbitally complete. If there exists an element $x^*\in X$ such that for any three elements $x,y,z \in I(x^*,T)$, at least one of the following is true:
\begin{enumerate}

\item[(i)]

\[
G(x,Tx,Tx)+G(y,Ty,Ty)+G(z,Tz,Tz) \leq \alpha G(x,y,z), \ 1\leq \alpha < 3
\]

\item[(ii)]
\[
G(x,Tx,Tx)+G(y,Ty,Ty)+G(z,Tz,Tz) \leq \beta [ G(Tx,y,z)+ G(x,Ty,z)+ G(x,y,Tz)], 
\]
$ 1/2 \leq \beta < 1$

\item[(iii)]
\begin{align*}
G(Tx,Ty,Tz) \leq & \delta \max \left\lbrace G(x,y,z), G(x,Tx,Tx),G(y,Ty,Ty),G(z,Tz,Tz),\right.\\
& \left. \frac{1}{4}[G(Tx,y,z)+ G(x,Ty,z)+ G(x,y,Tz)]\right\rbrace, 
\end{align*}
$0\leq \delta <1.$

\end{enumerate}

then $\{T^nx^*\}$ $G$-converges and $\xi = \lim_{n\to \infty}T^nx^*$ is a fixed of $T$.

\end{theorem}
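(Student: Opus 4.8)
The plan is to mimic the orbit-analysis strategy used in the proofs of Theorems~\ref{thm1} and \ref{thm3}, but now working only along the single orbit $I(x^*,T)=\{x^*,Tx^*,T^2x^*,\dots\}$ and carefully distinguishing which of the three alternatives (i), (ii), (iii) is available at each step. Write $x_n=T^nx^*$ and $G_n=G(x_n,x_{n+1},x_{n+1})$. First I would show that $\{G_n\}$ is non-increasing: apply the hypothesis to the triple $(x_{n-1},x_n,x_n)\in I(x^*,T)$, noting that symmetry of the $G$-metric gives $G(x_{n-1},x_n,x_n)=G(x_n,x_n,x_{n-1})$, and that $G(x_n,Tx_n,Tx_n)=G(y,Ty,Ty)=G_n$ for $y=x_n$, while $G(x_{n-1},Tx_{n-1},Tx_{n-1})=G_{n-1}$. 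In case (i) one gets $G_{n-1}+2G_n\le \alpha G(x_{n-1},x_n,x_n)=\alpha G_{n-1}$, hence $G_n\le \tfrac{\alpha-1}{2}G_{n-1}$ with $\tfrac{\alpha-1}{2}<1$; in case (ii), $G_{n-1}+2G_n\le \beta[G(Tx_{n-1},x_n,x_n)+2G(x_{n-1},Tx_n,x_n)]=\beta[G_n+2G(x_{n-1},x_{n+1},x_n)]$, and after bounding $G(x_{n-1},x_{n+1},x_n)\le G_{n-1}+G_n$ via (G5) and symmetry one again extracts $G_n\le c\,G_{n-1}$ for a constant $c<1$ depending on $\beta$; in case (iii), $G_n=G(Tx_{n-1},Tx_n,Tx_n)\le \delta\max\{G_{n-1},G_n,\tfrac14[G(x_n,x_n,x_{n-1})+2G(x_{n-1},x_{n+1},x_n)]\}$, and since $G_n<G_n$ is impossible and the bracket is $\le \tfrac34 G_{n-1}+\tfrac12 G_n$, one deduces $G_n\le \delta'G_{n-1}$ with $\delta'<1$. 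The upshot is a single contraction ratio $\lambda=\max\{\tfrac{\alpha-1}{2},\,c(\beta),\,\delta'\}<1$ valid uniformly, so $G_n\le \lambda^n G_0$.

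Next I would upgrade this to Cauchyness of $\{x_n\}$ exactly as in the earlier proofs: from $G_n\le\lambda^nG_0$ and repeated use of the rectangle inequality (G5) one gets $G(x_n,x_{n+p},x_{n+p})\le \sum_{k=n}^{n+p-1}G_k\le \tfrac{\lambda^n}{1-\lambda}G_0\to 0$, so $\{x_n\}$ is a $G$-Cauchy sequence contained in $I(x^*,T)$. By $T$-orbital completeness there is $\xi\in X$ with $T^nx^*\to\xi$; this gives the first assertion that $\{T^nx^*\}$ $G$-converges.

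It remains to prove $T\xi=\xi$, and this is the step I expect to be the main obstacle, because the hypothesis is only posited on the orbit $I(x^*,T)$ and \emph{a priori} $\xi$ need not lie in it, so one cannot simply plug $\xi$ into (i)--(iii). The remedy is to argue with the orbit points that approximate $\xi$. Suppose $T\xi\ne\xi$. Since $\{G_n\}\to 0$ and the $x_n$ are all in $I(x^*,T)$, apply the alternative to a triple such as $(x_{n-1},x_n,x_n)$ but now track $G(x_n,x_{n+1},x_{n+1})$ against mixed terms, then pass to the limit using $x_n\to\xi$, $Tx_n=x_{n+1}\to\xi$, and continuity of $G$ in its entries (Proposition~\ref{prop1}); in cases (i) and (ii) the left side tends to $3G(\xi,\xi,\xi)=0$ while the right side also tends to $0$, giving no contradiction directly, so instead I would feed in the triple $(x_n,x_n,\xi)$-type combinations together with orbital continuity $T\xi=\lim_i T^{n_i+1}x^*$ — here along the full sequence since it converges — to obtain $G(\xi,T\xi,T\xi)=\lim_n G(x_{n},x_{n+1},x_{n+1})=0$, whence $T\xi=\xi$ by (G1)--(G2). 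More carefully: orbital continuity yields $T\xi=\lim_{n\to\infty}Tx_n=\lim_{n\to\infty}x_{n+1}=\xi$, so in fact the fixed-point conclusion follows immediately from $G$-convergence of the orbit plus orbital continuity, just as in Theorems~\ref{thm1} and \ref{thm3}; the three-case analysis is needed only to establish that convergence. I would therefore present the proof in two clean blocks — (a) the case distinction proving $G_n\le\lambda^nG_0$ and hence Cauchyness, and (b) the short deduction $T\xi=\lim Tx_n=\xi$ from orbital continuity — and the delicate point to get right in block (a) is the bookkeeping of which variable plays the role of $x$, $y$, $z$ and the correct application of symmetry and (G5) to reduce every cross term to a combination of consecutive $G_k$'s.
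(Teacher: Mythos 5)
Your block (a) is essentially the paper's own argument: the author likewise sets $d_n=G(u_n,u_{n+1},u_{n+1})$ for $u_n=T^nx^*$, feeds the triple $(u_n,u_{n+1},u_{n+1})$ into whichever of (i)--(iii) holds, extracts $d_{n+1}\le\lambda d_n$ with a single ratio $\lambda<1$, and gets Cauchyness by telescoping with (G5). Two small slips in your case bookkeeping: $G(Tx_{n-1},x_n,x_n)=G(x_n,x_n,x_n)=0$, not $G_n$; and in case (ii) the ratio one actually obtains is $\tfrac{2\beta-1}{2-2\beta}$, which is less than $1$ only for $\beta<3/4$, not for all $\beta\in[1/2,1)$ --- but this defect is shared by the paper, so it is not specific to your write-up.

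The genuine gap is in block (b). Theorem \ref{extension} does \emph{not} assume that $T$ is orbitally continuous --- that hypothesis appears only in the corollary following it --- so your concluding step ``$T\xi=\lim Tx_n=\lim x_{n+1}=\xi$ by orbital continuity'' invokes a hypothesis you do not have, and the theorem cannot be reduced to the situation of Theorems \ref{thm1} and \ref{thm3}. You correctly diagnosed the real obstacle (the alternatives are posited only for points of $I(x^*,T)$, and $\xi$ need not lie in that orbit), but then escaped it by importing continuity instead of using the contractive hypotheses. The paper's route is different: it applies (i)--(iii) to the triples $(u_n,\xi,\xi)$, obtaining for infinitely many $n$ one of
\[
d_n+2G(\xi,T\xi,T\xi)\le\alpha\,G(u_n,\xi,\xi),\qquad
G(u_{n+1},T\xi,T\xi)\le\delta\max\{\dots\},
\]
(and the analogous inequality for (ii)), and then lets $n\to\infty$ along a subsequence where one fixed alternative recurs, squeezing $G(\xi,T\xi,T\xi)$. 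That argument has its own weaknesses --- it tacitly treats $\xi$ as a legitimate input to the orbit-only hypotheses, and the limiting inequalities $G(\xi,T\xi,T\xi)\le 2\beta G(\xi,T\xi,T\xi)$ and $\le 2\delta G(\xi,T\xi,T\xi)$ do not force $G(\xi,T\xi,T\xi)=0$ when $\beta,\delta\ge 1/2$ --- but it is the kind of argument the stated hypotheses require. As written, your proposal proves the corollary (where orbital continuity is assumed), not the theorem.
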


\begin{proof}

Define the sequence $d_n$ via the sequence of iterates $u_n=T^nx^*$ as $d_n=G(u_n, u_{n+1},u_{n+1})$ ($u_0=x^*$), $n=0,1,2,\cdots.$ 

Now suppose that (i) is true for the triplet $(u_n, u_{n+1},u_{n+1})$.

Then 
\[
d_n+2d_{n+1} \leq \alpha d_n,
\]
i.e.

\begin{align}\label{rr1}
d_{n+1} \leq \frac{\alpha-1}{2}d_n.
\end{align}

Similarly, if (ii) and (iii) are true, then correspondly we obtain:

\begin{align}\label{rr2}
d_{n+1} \leq \frac{2\beta-1}{2-2\beta}d_n,
\end{align}

\begin{align}\label{rr4}
d_{n+1} \leq \delta d_n.
\end{align}

From \eqref{rr1}--\eqref{rr4}, we observe that

\begin{align}\label{rr5}
d_{n+1} \leq \lambda\ d_n
\end{align}
for all $n$, where 

\[
 \lambda = \min \left\lbrace \frac{\alpha-1}{2},\frac{2\beta-1}{2-2\beta} , \delta \right\rbrace < 1.
\]

We can easily show that $d_n \to 0$ as $n\to \infty$ 
and by usual procedure, we derive that $\{u_n\}$ is a $G$-Cauchy sequence. Since $X$ is $T$-orbitally complete, then the limit $\xi$ of the sequence $\{u_n\}\in I(x^*,T) \subseteq X$.

\vspace*{0.2cm}

Now, we show that $\xi$ is a fixed point of $T$. For the triplet $(u_n,\xi,\xi)$, at least one of the following holds:

\begin{align}\label{rr6}
d_{n}+2 G(\xi,T\xi,T\xi) \leq \alpha \ G(u_n,\xi,\xi),
\end{align}

\begin{align}\label{rr7}
d_{n}+2 G(\xi,T\xi,T\xi) \leq \beta [ G(u_{n+1},\xi,\xi)+2G(u_n,\xi,\xi) +2G(\xi,T\xi,T\xi)],
\end{align}

\begin{align}\label{rr9}
G(u_{n+1},T\xi,T\xi) \leq & \delta \max \left\lbrace G(u_{n},\xi,\xi), d_n, G(\xi,T\xi,T\xi),\right.\nonumber \\
& \left. \frac{1}{4} [G(u_{n+1},\xi,\xi)+2G(u_{n},\xi,\xi)+2G(\xi,T\xi,T\xi)] \right\rbrace.
\end{align}

As we proceed along the sequence $\{u_n\}$, we obtain infinite values of $n$, say $\{n_k\}$, such that at least one the relations \eqref{rr6}--\eqref{rr9} is satisfied by the triplet $(u_{n_k},\xi,\xi)$. Lettting $k\to \infty$, we derive

$$ G(\xi,T\xi,T\xi) \leq 0; \quad G(\xi,T\xi,T\xi) \leq 2\beta G(\xi,T\xi,T\xi),$$ 

\vspace*{0.10cm}
$$\  G(\xi,T\xi,T\xi)\leq 2\delta G(\xi,T\xi,T\xi)  ,$$

\vspace*{0.10cm}
in the case of \eqref{rr6}, \eqref{rr7} and \eqref{rr9} respectively. In all these cases, the conclusion is that $\xi$ is a fixed point of $T$.

\end{proof}

Using the same idea as in Theorem \ref{thm2}, we are inspired to give the following lemma.

\begin{corollary}
Let $T$ be an orbitally continuous self mapping on a $G$-metric space $(X,G)$. Assume that there exists an element $x^*\in X$ such that for any three elements $x,y,z \in I(x^*,T)$, at least one of the inequalities (i)--(ii) is true. Moreover if $\{T^nx^*\}$ has a cluster point $u\in X$ then $u$ is a fixed point of $T$ and $\{T^nx^*\}$ $G$-converges to $u.$

\end{corollary}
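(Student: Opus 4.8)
The plan is to mimic the argument of Theorem \ref{extension} almost verbatim, replacing the appeal to $T$-orbital completeness by the hypothesis that the orbit $\{T^nx^*\}$ has a cluster point, exactly as Theorem \ref{thm2} did for Theorem \ref{thm1}. First I would set $u_n = T^nx^*$ and $d_n = G(u_n,u_{n+1},u_{n+1})$, and rerun the dichotomy from the proof of Theorem \ref{extension}: for the triplet $(u_n,u_{n+1},u_{n+1})$ at least one of (i), (ii) holds, which yields in each case an inequality $d_{n+1}\le \lambda d_n$ with $\lambda = \min\{\tfrac{\alpha-1}{2},\tfrac{2\beta-1}{2-2\beta}\}<1$ (the case (iii) is dropped here, but this only shrinks the set of cases to check). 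Hence $d_n\to 0$.

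Next I would use the cluster point. Let $\{n_k\}$ be a subsequence with $T^{n_k}x^* \to u$. By orbital continuity, $T^{n_k+1}x^* = T(T^{n_k}x^*)\to Tu$. Now apply the dichotomy to the triples $(u_{n_k}, u, u)$: passing to a further subsequence along which the \emph{same} alternative (i) or (ii) holds for all $k$, and letting $k\to\infty$, the left-hand sides contain $d_{n_k}\to 0$ and $G(u_{n_k},u,u)\to 0$ and $G(u_{n_k+1},u,u)\to 0$ (continuity of $G$ in its arguments, via Proposition \ref{prop1}), so each alternative collapses to an inequality forcing $G(u,Tu,Tu)\le c\, G(u,Tu,Tu)$ with $c<1$ (namely $0$ in case (i), $2\beta$ in case (ii) — wait, $2\beta$ can exceed $1$), hence $G(u,Tu,Tu)=0$ and $Tu=u$. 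Finally, once $u$ is a fixed point, the inequality $d_{n+1}\le\lambda d_n$ together with the triangle inequality gives $G(u_n,u_{n+p},u_{n+p})\le \frac{\lambda^n}{1-\lambda}d_0\to 0$, so $\{u_n\}$ is $G$-Cauchy; since it has $u$ as a cluster point, the whole sequence $G$-converges to $u$.

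The main obstacle is the case (ii) step: letting $k\to\infty$ in \eqref{rr7} with $u$ in place of $\xi$ gives $G(u,Tu,Tu)\le 2\beta\,G(u,Tu,Tu)$, and since only $1/2\le\beta<1$ is assumed, $2\beta\in[1,2)$ and this is \emph{not} a contradiction. To close this gap I would instead apply alternative (ii) to an asymmetric triple such as $(u_{n_k}, u_{n_k}, u)$ or $(Tu,u,u)$ and use property (G3) to extract a strict inequality, or argue that if (ii) holds for infinitely many $n$ for the triple $(u_n,u,u)$ then one may also invoke it for $(u,u,Tu)$-type triples to get $G(u,Tu,Tu)\le \beta[G(Tu,u,Tu)+2G(u,u,Tu)]$, and by symmetry/(G3) bound the right side by a multiple of $G(u,Tu,Tu)$ with constant $<1$; this is the delicate point and is the reason the statement restricts to (i)--(ii) rather than including (iii), where the constant $2\delta$ has the same defect. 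If that maneuver fails one falls back on the weaker conclusion obtainable directly: in case (i), $k\to\infty$ gives $G(u,Tu,Tu)\le 0$ outright, so the burden is entirely on handling case (ii) cleanly.

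Reference in passing: Proposition \ref{prop1} supplies the continuity of $G$ needed when passing to limits, and the telescoping estimate is the ``usual procedure'' already invoked in the proofs of Theorems \ref{thm1} and \ref{extension}.
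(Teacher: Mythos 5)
Your descent step is essentially right in spirit (though note, as a secondary point, that the uniform ratio has to be the \emph{maximum}, not the minimum, of $\tfrac{\alpha-1}{2}$ and $\tfrac{2\beta-1}{2-2\beta}$, since different $n$ may invoke different alternatives — and $\tfrac{2\beta-1}{2-2\beta}<1$ only when $\beta<3/4$, a defect inherited from the proof of Theorem \ref{extension}). The real problem is the fixed-point step, where the gap you yourself flag is genuine and none of your proposed repairs closes it. Applying alternative (ii) to the triples $(u_{n_k},u,u)$ and letting $k\to\infty$ yields only $G(u,Tu,Tu)\le 2\beta\,G(u,Tu,Tu)$ with $2\beta\in[1,2)$, which is vacuous; and your fallback of invoking the alternatives for triples such as $(Tu,u,u)$ or $(u,u,Tu)$ is not licensed by the hypotheses, because (i)--(ii) are only assumed for triples drawn from the orbit $I(x^*,T)$, and the cluster point $u$ (hence $Tu$) need not lie in that orbit. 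As written, your argument establishes $Tu=u$ only when case (i) occurs infinitely often along $(u_{n_k},u,u)$.

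The route the paper intends — the ``same idea as in Theorem \ref{thm2}'' — sidesteps the second application of the contractive alternatives entirely, and you should use it. You already have $d_n\le\lambda^n d_0\to 0$. By orbital continuity $u_{n_k+1}=Tu_{n_k}\to Tu$, and by the joint continuity of $G$ in its three variables (Proposition 1 of Mustafa--Sims, invoked in the preliminaries, together with Proposition \ref{prop1}) one gets $d_{n_k}=G(u_{n_k},u_{n_k+1},u_{n_k+1})\to G(u,Tu,Tu)$; since $d_{n_k}\to 0$, this forces $G(u,Tu,Tu)=0$, i.e.\ $Tu=u$, with no case analysis and no constant to worry about. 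Your concluding telescoping argument then gives $G$-convergence of the whole orbit to $u$ as you say. (Incidentally, the same objection applies to the published proof of Theorem \ref{extension} itself, where the derived inequalities $G(\xi,T\xi,T\xi)\le 2\beta\,G(\xi,T\xi,T\xi)$ and $G(\xi,T\xi,T\xi)\le 2\delta\,G(\xi,T\xi,T\xi)$ likewise fail to be contradictions when $2\beta\ge1$ or $2\delta\ge1$; there too the $d_n\to 0$ argument is the one that actually works.)
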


\begin{remark}
The fixed point result stated in Theorem \ref{extension} leads to the existence of a unique fixed point if the map $T$ satisfies only the condition (iv).
\end{remark}

\begin{theorem}
Let $T$ be the map as defined in Theorem \ref{extension} and assume that $T$ satisfies only one of the conditions (i)--(iii). If $T$ further satisfies at least one of the condtions

\begin{itemize}
\item[v)]
\[
G(\xi,Tx,Tx) < G(x,x,\xi)+G(x,Tx,Tx),
\]

\item[vi)]

\[
G(\xi,x,x) < G(\xi,Tx,Tx)+ G(x,Tx,Tx),\text{ for all }x\neq \xi,
\]

\end{itemize}
then the uniqueness of the fixed point is guaranteed.
\end{theorem}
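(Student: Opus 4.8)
The plan is to argue by contradiction, using Theorem~\ref{extension} as a black box. By that theorem we already know $\xi=\lim_{n\to\infty}T^nx^*$ exists and satisfies $T\xi=\xi$, so the only remaining task is to exclude a second fixed point. Assume therefore that $\eta\in X$ satisfies $T\eta=\eta$ and $\eta\neq\xi$, and aim to contradict whichever of the extra hypotheses v), vi) is in force.

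If v) holds, I would substitute $x=\eta$ into $G(\xi,Tx,Tx)<G(x,x,\xi)+G(x,Tx,Tx)$. Since $T\eta=\eta$, the left side becomes $G(\xi,\eta,\eta)$, the first summand on the right becomes $G(\eta,\eta,\xi)$, and the second becomes $G(\eta,\eta,\eta)=0$ by (G1). Invoking the full symmetry (G4) to identify $G(\eta,\eta,\xi)$ with $G(\xi,\eta,\eta)$, the inequality collapses to $G(\xi,\eta,\eta)<G(\xi,\eta,\eta)$, which is absurd. (Note that v) is only meaningful for $x\neq\xi$, since $x=\xi$ would already force the false statement $0<0$; taking $x=\eta\neq\xi$ is thus legitimate.) If instead vi) holds, I would substitute $x=\eta$ into $G(\xi,x,x)<G(\xi,Tx,Tx)+G(x,Tx,Tx)$, which is permitted because $\eta\neq\xi$; again $T\eta=\eta$ together with (G1) reduces the right side to $G(\xi,\eta,\eta)+0$, while the left side is $G(\xi,\eta,\eta)$, so we once more obtain $G(\xi,\eta,\eta)<G(\xi,\eta,\eta)$, a contradiction. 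In either case no fixed point distinct from $\xi$ can exist, which is uniqueness.

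The point requiring a little care is bookkeeping rather than a genuine obstacle: conditions (i)--(iii) in Theorem~\ref{extension} are imposed only on triples drawn from the orbit $I(x^*,T)$, so a hypothetical second fixed point $\eta$ need not be reachable by those inequalities. This is exactly why v)--vi) are stated for general $x$ (excluding only $x=\xi$): they are precisely the hypotheses that bring an arbitrary fixed point into comparison with $\xi$. If $\eta$ did happen to lie in $I(x^*,T)$ the orbit would stabilize at $\eta$ and one would get $\xi=\eta$ for free; the substance of the theorem is handling the complementary case, and the two-line computations above do so uniformly.

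Finally, I would remark that the argument uses no contractivity of $T$ along the orbit beyond what Theorem~\ref{extension} already supplies; hypotheses v) and vi) each act purely as a ``diagonal'' separation condition forcing any two fixed points to coincide. I expect the write-up to be short, the only subtlety being to state clearly at the outset that $\xi$ is the fixed point produced by Theorem~\ref{extension} and that v)/vi) are assumed for all $x\in X$ with $x\neq\xi$.
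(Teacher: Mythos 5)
Your argument is correct: substituting the hypothetical second fixed point $\eta$ (with $T\eta=\eta$, $\eta\neq\xi$) into v) or vi), using (G1) to kill the $G(\eta,T\eta,T\eta)$ term and (G4) to identify $G(\eta,\eta,\xi)$ with $G(\xi,\eta,\eta)$, yields the impossible strict inequality $G(\xi,\eta,\eta)<G(\xi,\eta,\eta)$, and existence of $\xi$ is already supplied by Theorem~\ref{extension}. The paper states this theorem without any proof, so there is nothing to compare against; your two-line contradiction is the natural (and essentially forced) argument, and your observation that v) must implicitly exclude $x=\xi$ is a fair reading of the statement.
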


\bibliographystyle{amsplain}

\begin{thebibliography}{99}




\bibitem{ciric} L. B. \'{C}iri\'{c}; \textit{A generalization of Banach's contraction principle}. Proc. Am. Math. Soc. ´ 45,
267--273 (1974).




\bibitem{Gaba1} Y. U. Gaba; \textit{$\lambda$-sequences and fixed point theorems in $G$-metric spaces}, Journal of Nigerian Mathematical Society, Vol. 35, pp. 303-311, 2016.




\bibitem{Gaba5} Y. U. Gaba; \textit{New Contractive Conditions for Maps in G-metric Type Spaces}, Advances in Analysis, Vol. 1, No. 2, October 2016. 


\bibitem{Gaba6} Y. U. Gaba; \textit{Fixed points for multi-valued mapps in $G$-metric type spaces via $\lambda$-sequences}, in preparation.

\bibitem{ja1} J.R. Jachymski; \textit{Equivalence of some contractivity properties over metrical structure}. Proc.
Am. Math. Soc. 125, 2327–2335 (1997).

\bibitem{j}M. Jleli and B. Samet; \textit{Remarks on G-metric spaces and fixed point theorems}, Fixed Point Theory and Applications 2012 2012:210.


\bibitem{r}M. Jovanovi\'c, Z. Kadelburg, and S. Radenovi\'c ;
\textit{Common Fixed Point Results in
Metric-Type Spaces}' Fixed Point Theory and Applications
Volume 2010, Article ID 978121, 15 pages.




\bibitem{s} S. K. Mohanta; 
\textit{Some Fixed Point Theorems in G-metric Spaces},
Analele \c{S}t.  Univ. Ovidius Constan\c{t}a, Vol. 20(1), 2012, 285--306

\bibitem{mustafa1} Z. Mustafa;  \textit{A new structure for generalized metric spaces with applications to fixed point
theory}, Ph.D. thesis, The University of Newcastle, Australia (2005).


\bibitem{Mustafa} Z. Mustafa and B. Sims; \textit{A new approach to generalized metric spaces}, Journal of Nonlinear Convex Analysis, 7 (2006), 289--297.


\bibitem{mustafa3} Z. Mustafa, H. Obiedat, and F. Awawdeh; \textit{Fixed Point Theorem for Expansive Mappings
in G-Metric Spaces}, Int. J. Contemp. Math. Sciences, Vol. 5, 2010, no. 50, 2463 - 2472.


\bibitem{mustafa4} Z. Mustafa, H. Obiedat, and F. Awawdeh; \textit{Some fixed point theorem for mappings on a complete G- metric space}, Fixed Point Theory and Applications Volume 2008, Article ID 189870, 12 pages.

\bibitem{mustafa2} Z. Mustafa, W. Shatanawi, and M. Bataineh;
\textit{Existence of Fixed Point Results in G-Metric Spaces}, Int.J. Math. Math. Sci.,Volume (2009), Article
ID 283028,10 pages,




\bibitem{p} S. R. Patil; \textit{Expansion Mapping Theorems in
G-cone Metric Spaces}, Int. Journal of Math. Analysis, Vol. 6, 2012, no. 44, 2147 - 2158.


\bibitem{ro} B.E. Rhoades; \textit{A comparison of various definations of contractive mappings}. Proc. Am.
Math. Soc. 226, 257–290 (1977).


\bibitem{v} V. Sihag, R. K. Vats and C. Vetro; \textit{A fixed point theorem in G-metric spaces via $\alpha$-series}, Quaestiones Mathematicae Vol. 37 , Iss. 3,Pages 429-434, 2014.

\bibitem{sin} S. P. Singh and B. A. Meade; \textit{ On common fixed point theorems}. Bull. Austral. Math. Soc, 16 (1977), 49--53.





\end{thebibliography}

\end{document}